\newtheorem{lm}{Lemma}[section]
\newtheorem{cor}{Corollary}[section]
\newtheorem{pro}{Proposition}[section]
\title{On the tightness of linear relaxations of alternative mixed integer programming formulations for the generator maintenance scheduling problem}
\date{February 12, 2025}	
\author{ \href{https://orcid.org/0000-0000-0000-0000}{\includegraphics[scale=0.06]{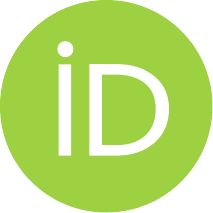}\hspace{1mm}Tiago~Andrade}\thanks{The author thanks his colleagues at PSR for the valuable discussions on generator maintenance scheduling problem and for reviewing the manuscript, especially Sergio Granville, Gerson Couto, Luiz Carlos da Costa Junior, Rodrigo Benoliel, and Alessandro Soares.} \\
	PSR, Rio de Janeiro, Brazil\\
	\texttt{tiago.andrade@psr-inc.com} \\
}
\begin{document}
\maketitle

\begin{abstract}
	This paper presents a comprehensive theoretical analysis of six distinct Mixed-Integer Programming (MIP) formulations for preventive Generator Maintenance Scheduling (GMS), a critical problem for ensuring the reliability and efficiency of power systems. By comparing the tightness of their linear relaxations, we identify which formulations offer superior dual bound and, thus, better computational performance. Our analysis includes establishing relationships between the formulations through definitions, lemmas, and propositions, demonstrating that some formulations provide tighter relaxations that lead to more efficient optimization outcomes. These findings offer valuable insights for practitioners and researchers in selecting the most effective models to enhance the scheduling process of preventive generator maintenance.
\end{abstract}

\keywords{Generator maintenance scheduling \and mixed integer programming \and linear relaxation}

\section{Introduction} \label{section
}

The efficient and reliable operation of modern power systems critically depends on the effective scheduling of maintenance for generating units. Generators, as the primary sources of electricity production, operate under continuous stress and are subject to wear and degradation over time. Periodic maintenance is essential not only to restore their operational efficacy but also to prevent potential failures that could cascade through the power grid, leading to widespread outages and significant economic losses.

Generator Maintenance Scheduling (GMS) is a complex and multifaceted problem that requires strategic planning and careful consideration of various operational constraints. The challenge lies in determining optimal maintenance periods that balance the need for equipment upkeep with the obligation to meet fluctuating electricity demand, ensure adequate reserve margins, and comply with regulatory standards. Poorly scheduled maintenance can result in increased operational costs, reliance on expensive backup power sources, compromised system reliability, and potential regulatory non-compliance.

Over the years, numerous studies have developed and refined MIP formulations for GMS, aiming to enhance model accuracy and computational efficiency such as \cite{froger2016maintenance,chen1991optimal,kralj1995multiobjective,fourcade1997optimizing,moro1999goal,conejo2005generation,wu2008genco,barot2008security,mazidi2018profit,zhang2023extended,niazi2013long,norozpour2013preventive}.
While these MIP formulations are designed to model the same GMS problem under the same operational assumptions, they differ in how they mathematically represent scheduling decisions and constraints. Each formulation introduces its own set of variables and constraints to capture the maintenance scheduling process. However, these differences in formulation can lead to variations in the tightness of their linear relaxations -- the solutions obtained when the integrality of decision variables is relaxed to allow fractional values.

The tightness of a linear relaxation is a crucial factor affecting the performance of optimization algorithms used to solve MIP problems, such as branch-and-bound and branch-and-cut. A tighter linear relaxation provides a better approximation of the integer solution, potentially reducing the computational effort required to reach optimality. Despite modeling the same problem under the same hypothesis, the alternative formulations can yield linear relaxations that vary in tightness, impacting computational efficiency and solution quality.

However, existing literature often does not justify choosing one formulation over another, nor does it thoroughly compare the computational implications of different formulations. This research gap motivates the need for a systematic analysis of these formulations to understand their relative strengths and weaknesses in terms of tightness of linear relaxation. 

In this context, our paper aims to provide a comprehensive theoretical analysis of several MIP formulations for preventive generator maintenance scheduling. We present and formalize six distinct formulations, each capturing the essential aspects of GMS under the same operational assumptions. By focusing on the differences in their linear relaxations, we aim to identify which formulations offer tighter relaxations and therefore may lead to more efficient computational performance when solving the GMS problem.

There are other variations of GMS that focus on other parts of the problem, such as predictive maintenance, where, using generator data, the planner tries to find out when it can still operate and when it must enter maintenance. Or, for example, a decision of maintenance planning counting how many equivalent hours of operation the given generator has before it must enter a maintenance state. These variations are not in the scope of this paper. We are focusing on models that adopt the same core premises. That is, we are planning for preventive generator maintenance with a given maintenance list with fixed durations that must be scheduled in a given horizon. Furthermore, when maintenance starts, the plant must remain in a maintenance state until its completion. That is, each maintenance cannot be divided. We also do not consider specific constraints such as precedence or non-coincidence constraints. These constraints could be, in principle, added to any of the studied formulations and would only unnecessarily complicate the intended analysis. We focus on analyzing the representation of the maintenance state and how to ensure the given duration and continuity of maintenance. 

Our theoretical comparisons include definitions, lemmas, and propositions that establish the relationships between different formulations. We demonstrate that specific formulations are strictly tighter than others, meaning that their linear relaxations provide better approximations of the integer solutions, which can lead to more efficient computational performance. These insights are valuable for practitioners who aim to enhance both computational efficiency and solution quality in preventive generator maintenance scheduling since a stronger formulation usually achieves a better solution quicker and provides a better bound faster.

\section{Literature review}

Several comprehensive surveys on maintenance scheduling within the energy industry have been published. Notably, \cite{froger2016maintenance} presents a systematic review of maintenance scheduling models and methods, while \cite{prajapat2017preventive} offers another extensive overview of preventive maintenance scheduling approaches.

One of the earlier studies on optimal maintenance planning for power systems is presented by Chen and Toyoda \cite{chen1991optimal}. The authors formulate two distinct problems: (i) an isolated-system maintenance scheduling model that ignores network constraints, solved via linear programming and branch-and-bound (BnB); and (ii) a multi-area system model that incorporates network constraints, tackled with a decomposition technique based on virtual load. Their objective is to maximize the minimum reserve margin -— defined as (available capacity - demand)/demand -- while also forbidding units within the same power plant from overlapping their maintenance periods.

Kralj and Petrovic \cite{kralj1995multiobjective} subsequently extend maintenance scheduling to a multi-objective (MO) framework, focusing on thermal units. They employ a multi-objective BnB algorithm with successive approximations, targeting multiple criteria such as minimizing fuel costs, maximizing reliability, and reducing constraint violations. Their approach is illustrated through a realistic example involving 21 thermal generating units.



Focusing on nuclear power plants, Fourcade et al.\ \cite{fourcade1997optimizing} develop a generation maintenance scheduling model for up to four reactors over a five-year horizon. Their objective is to minimize total fuel cost (nuclear and fossil) under the constraint that at most one reactor can be shut down each week. Moro and Ramos \cite{moro1999goal} propose a goal-programming-based approach that captures both economic and reliability criteria. In their two-stage solution, they first solve a purely economic objective (minimizing operating costs), then add a constraint bounding these costs near the optimal level while seeking to improve reliability through reserve margin criteria. Their model, which includes resources like pumped-hydro units and fuel consumption limits, is applied to the weekly maintenance scheduling of the Spanish power system.

Shifting attention to competitive electricity markets, Conejo et al.\ \cite{conejo2005generation} discuss a coordinating mechanism for generation maintenance scheduling that balances producer incentives with system-level reliability requirements. In a similar vein, Wu et al.\ \cite{wu2008genco} propose a stochastic risk-based model for generation companies (GENCOs), integrating hourly price-based unit commitment with Monte Carlo simulations to handle price uncertainty in energy, ancillary services, and fuel costs. Barot and Bhattacharya \cite{barot2008security} address security-constrained maintenance planning in deregulated markets by allowing independent system operators (ISOs) to coordinate and incentivize maintenance schedules without directly imposing them, thereby reducing unserved energy at individual buses.

More recently, Beloniel et al.~\cite{benoliel2024considering} address the impact of utilizing an optimal maintenance scheduling in the hydrothermal medium term dispatch and compare it to utilizing an approximation using the plant's historical average unavailabilities. They also evaluate the effect of uncertainty in the maintenance executions. Furthermore, they propose a CVaR objective function to not only consider the maximization of minimum reserve margin as the standard practice.


\section{Mathematical Model}



\subsection{Formulation 1}

The binary variable $X_{t,m}$ indicates whether the maintenance $m$ is executed in period $t$. It values one if that is the case and zero otherwise. Binary variable $S_{t,m}$ indicates whether the maintenance $m$ begins executing in period $t$. It values one if that is the case and zero otherwise. The same notation is used hereinafter for other formulations too.

Constraint~\eqref{eq:1a} imposes that the total of periods that a given maintenance $m$ is executed sums to the maintenance duration $W_m$. Constraint~\eqref{eq:1b} imposes that each maintenance begins only once. Note that, although not necessarily explicit when it is used in the literature, each maintenance solution mustn't initiate at the end of the horizon such that it would not terminate in the horizon, i.e., $ S_{t,m} = 0, \forall m \in M, t \geq |T| - W_m + 2$. Constraint~\eqref{eq:1c} together with the previous \eqref{eq:1a} and \eqref{eq:1b} impose that the maintenance is uninterrupted until it ends. It also links variables $S$ and $X$.

\cite{niazi2013long,norozpour2013preventive} use this formulation.

\begin{subequations}
\label{eq:1}
\begin{align}
    & \sum_{t \in T} X_{t,m}  = W_m & \forall m \in M \label{eq:1a} \\
    & \sum_{t \in T} S_{t,m}  = 1 & \forall m \in M \label{eq:1b} \\
    & X_{t,m} - X_{t-1,m} \leq S_{t,m}  & \forall m \in M \label{eq:1c} \\
    & X_{t,m} \in \{0,1\} & \quad \forall t \in T, m \in M \label{eq:1d}  \\
    & S_{t,m} \in \{0,1\} & \forall t \in T, m \in M \label{eq:1e}
\end{align}
\end{subequations}


\subsection{Formulation 2}

Formulation~\eqref{eq:2} only uses variable $X$ indicating when the maintenance is being executed, not when it started directly. Therefore, it uses half of the number of variables of Formulation~\eqref{eq:1}.

\cite{zhang2023extended} uses this formulation.

Constraint~\eqref{eq:2a} is the same as Constraint~\eqref{eq:1a}. Constraint~\eqref{eq:2b} is the constraint that imposes non-preemptive scheduling. It can be interpreted as if the maintenance $m$ is being executed at period $t$. Then, the same maintenance must be executed at least the time period $t-1$ or at the following $W_m$ periods. Note that it is implicitly defined that $X_{0,m}=0$ and that $X_{tl,m}=0$ for any $tl$ that is beyond the end of the considered horizon.

\begin{subequations}
\label{eq:2}
\begin{align}
    & \sum_{t \in T} X_{t,m}  = W_m & \forall m \in M \label{eq:2a} \\
    & X_{t,m} - X_{t-1,m} \leq X_{tl,m} & \forall t \in T, m \in M, tl \in \{t,...t+W_m-1\} \label{eq:2b} \\
    & X_{t,m} \in \{0,1\} & \quad \forall t \in T, m \in M \label{eq:2c}
\end{align}
\end{subequations}


\subsection{Formulation 3}

Formulation~\eqref{eq:3} also only uses variable $X$ to represent the maintenance schedule. Constraint~\eqref{eq:3a} is the same as Constraint~\eqref{eq:1a}. Constraint~\eqref{eq:3b} is the constraint that imposes non-preemptive scheduling. It can be interpreted as if the maintenance $m$ is executed at time period $t$. Then, the same maintenance must be executed at least the time period $t-1$ or at $t+W_m-1$. Note that it is implicitly defined that $X_{0,m}=0$ and that $X_{tl,m}=0$ for any $tl$ that is beyond the end of the considered horizon.

Note that Formulation~\eqref{eq:3} is a variant of Formulation~\eqref{eq:2} but the Constraint~\eqref{eq:3b} replaces the Constraint~\eqref{eq:2b} where \eqref{eq:3b} is \eqref{eq:2b} for $tl=t+W_m-1$. That is, it removes some constraints.

\cite{mazidi2018profit}, \cite{barot2008security}  and \cite{conejo2005generation} use this formulation.

\begin{subequations}
\label{eq:3}
\begin{align}
    & \sum_{t \in T} X_{t,m}  = W_m & \forall m \in M \label{eq:3a} \\
    & X_{t,m} - X_{t-1,m} \leq X_{t+W_m-1,m}  & \forall t \in T, m \in M \label{eq:3b} \\
    & X_{t,m} \in \{0,1\} & \quad \forall t \in T, m \in M \label{eq:3c}
\end{align}
\end{subequations}


\subsection{Formulation 4}

\cite{moro1999goal} use this formulation with a variable representing the end of the maintenance. For a more direct comparison with other formulations, we added the modeling, replacing it with a variable representing the start of maintenance without changing its presentability. Constraint~\eqref{eq:4c} indicates that if maintenance $m$ is being executed in period $t$, it must have started recently.

\begin{subequations}
\label{eq:4}
\begin{align}
    & \sum_{t \in T} X_{t,m}  = W_m & \forall m \in M \label{eq:4a} \\
    & \sum_{t \in T} S_{t,m}  = 1 & \forall m \in M \label{eq:4b} \\
    & \sum_{t' = t-W_m+1}^t S_{t',m} \geq X_{t,m}  & \forall t \in T, m \in M \label{eq:4c} \\
   & X_{t,m} \in \{0,1\} & \quad \forall t \in T, m \in M \label{eq:4d}  \\
    & S_{t,m} \in \{0,1\} & \forall t \in T, m \in M \label{eq:4e}
\end{align}
\end{subequations}


\subsection{Formulation 5}

\cite{wu2008genco} uses this formulation. This formulation also only uses variable $X$. Constraint~\eqref{eq:5b} represents that if for a given $t$, $X_{t,m}=1$ and $X_{t-1,m}=0$, which means that the maintenance is starting, then the sum of the next $W_m$ periods maintenance executions must sum to $W_m$. All $X_{tl,m}$ for the near "future" must also be activated.

\begin{subequations}
\label{eq:5}
\begin{align}
    & \sum_{t \in T} X_{t,m}  = W_m & \forall m \in M \label{eq:5a} \\
    & W_m  (X_{t,m} - X_{t-1,m}) \leq \sum_{t' = t}^{t+W_m - 1} X_{t',m}  & \forall m \in M \label{eq:5b} \\
    & X_{t,m} \in \{0,1\} & \quad \forall t \in T, m \in M \label{eq:5c}
\end{align}
\end{subequations}

\subsection{Formulation 6}

The formulation~\eqref{eq:6} does not represent the execution variable explicitly. Instead, it uses the expression $\sum_{t' = t-W_m+1}^t S_{t',e}$ in place where the execution variable $X_{t,m}$ would appear. Note that, although not necessarily explicit when it is used in the literature, each maintenance solution must initiate before $W_m$ periods before the end of the horizon $T$. Meaning it must terminate within the horizon, i.e., $ S_{t,m} = 0, \forall m \in M, t \geq |T| - W_m + 2$.

\cite{fourcade1997optimizing} and \cite{benoliel2024considering} use this Formulation.

\begin{subequations}
\label{eq:6}
\begin{align}
    & \sum_{t \in T} S_{t,m}  = 1 & \quad \forall m \in M \label{eq:6a} \\
    & S_{t,m} \in \{0,1\} & \quad \forall t \in T, m \in M \label{eq:6b}
\end{align}
\end{subequations}

 It is equivalent to Formulation~\eqref{eq:7} with fewer variables and constraints. These two formulations have identical linear relaxation and will be treated as the same for linear relaxation analysis.
\begin{subequations}
\label{eq:7}
\begin{align}
    & \sum_{t \in T} S_{t,m}  = 1 & \quad \forall m \in M \label{eq:7a}  \\
    & \sum_{t' = t-W_m+1}^t S_{t',m} = X_{t,m} & \quad \forall t \in T, m \in M \label{eq:7b} \\
    & X_{t,m} \in \{0,1\} & \quad \forall t \in T, m \in M \label{eq:7c} \\
    & S_{t,m} \in \{0,1\} & \quad \forall t \in T, m \in M \label{eq:7d}
\end{align}
\end{subequations}


\subsection{Theorical comparison}

In this section, we compare the linear relaxation of the aforementioned formulations.

\textbf{Definition 1:} Given two MIP formulations, A and B, for the same problem. Formulation A is at least as tight as formulation B if all continuous solutions that satisfy A, also satisfy B. 

\textbf{Definition 2:} Given two MIP formulations, A and B, for the same problem. A is equivalent to B if A is at least as tight as B and B is at least as tight as A. 

\textbf{Definition 3:} Given two MIP formulations, A and B, for the same problem. Formulation A is said to be strictly tighter than formulation B if A is at least as tight as B and there is at least one continuous solution that satisfies B that does not satisfy A. 

\textbf{Definition 4:} Given two MIP formulations, A and B, for the same problem. Formulations A and B are said to be noncomparable if there is at least one continuous solution that satisfies A and does not satisfy B and at least one continuous solution that satisfies B and does not satisfy A.

\textbf{Remark 1:} {We do not mention the objective function in our definition and analysis of linear relaxation because we assume that the same solution in the models will result in the same objective function.}

\textbf{Remark 2:} {We always assume a generous interpretation of the investigated formulations. That is, we eliminate binary variables as a prepossessing even thought it is not necessarily mentioned in the papers. In other words, we always let  $ S_{t,m} = 0, \forall m \in M, t \geq |T| - W_m + 2$ when variable $S$ is defined in the problems.}

\begin{lm}
    \label{lm:main}
    Under Formulation~\eqref{eq:6} and Formulation~\eqref{eq:7}, it holds that:
    \[
    \sum_{t \in T} X_{t,m} = W_m \quad \forall m \in M.
    \]
\end{lm}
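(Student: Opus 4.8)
The plan is to compute $\sum_{t \in T} X_{t,m}$ directly by substituting the defining relation for $X_{t,m}$ and then interchanging the order of summation. Under Formulation~\eqref{eq:7}, constraint~\eqref{eq:7b} gives $X_{t,m} = \sum_{t'=t-W_m+1}^{t} S_{t',m}$, while under Formulation~\eqref{eq:6} this same expression is used directly in place of $X_{t,m}$; in either case the argument is identical. Summing over all $t \in T$ yields a double sum $\sum_{t \in T} \sum_{t'=t-W_m+1}^{t} S_{t',m}$, and the first step is to reverse the two summations and count, for each fixed start index $t'$, how many values of $t$ cause $S_{t',m}$ to appear.

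The key counting step is to observe that $S_{t',m}$ appears in the inner window exactly when $t-W_m+1 \le t' \le t$, i.e.\ for $t \in \{t', t'+1, \dots, t'+W_m-1\}$, a block of exactly $W_m$ consecutive periods. I would then argue that every such period lies inside the horizon whenever $S_{t',m}$ can be nonzero: on the lower side, indices $t' \le 0$ are excluded by the implicit convention $S_{t',m}=0$, and on the upper side, Remark~2 forces $S_{t',m}=0$ for $t' \ge |T|-W_m+2$, so any admissible nonzero start satisfies $t'+W_m-1 \le |T|$ and its entire window $\{t',\dots,t'+W_m-1\}$ stays within $T$. Consequently each nonzero $S_{t',m}$ is counted exactly $W_m$ times.

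Collecting terms, the double sum collapses to $W_m \sum_{t' \in T} S_{t',m}$, and applying the single-start constraint~\eqref{eq:7a} (equivalently~\eqref{eq:6a}), which asserts $\sum_{t' \in T} S_{t',m}=1$, gives $\sum_{t \in T} X_{t,m} = W_m$ for every $m \in M$, as claimed. Because the manipulations are linear, the identity holds for the continuous relaxation as well, which is what matters for the subsequent tightness analysis.

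I expect the main obstacle to be the careful bookkeeping at the boundaries of the horizon rather than the algebra: the identity holds with equality only because the preprocessing of Remark~2 removes precisely those starts whose maintenance window would be truncated at the end of $T$. Without that preprocessing one would only obtain the inequality $\sum_{t \in T} X_{t,m} \le W_m$, so the crux of the proof is verifying that this truncation never occurs for admissible solutions.
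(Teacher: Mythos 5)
Your proof is correct and follows essentially the same route as the paper: substitute constraint~\eqref{eq:7b}, interchange the order of summation, count exactly $W_m$ appearances of each $S_{t',m}$, and conclude via~\eqref{eq:7a}. If anything, your boundary bookkeeping is more explicit than the paper's, which handles truncation with the parenthetical ``assuming $T$ is sufficiently large,'' whereas you correctly pin the equality (rather than a mere $\leq$) on the Remark~2 preprocessing $S_{t,m}=0$ for $t \geq |T|-W_m+2$.
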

\begin{proof}
    Consider any (possibly fractional) solution $(S_{t,m}, X_{t,m})$ satisfying Formulation~\eqref{eq:7}, where:
    \begin{align}
        & \sum_{t \in T} S_{t,m} = 1 \quad \forall m \in M, \eqref{eq:7a} \\
        & X_{t,m} = \sum_{t'=t - W_m +1}^{t} S_{t',m} \quad \forall t \in T, m \in M. \eqref{eq:7b}
    \end{align}
    We aim to prove that $\sum_{t \in T} X_{t,m} = W_m$ for all $m \in M$.

    Summing both sides of Equation~\eqref{eq:7b} over $t \in T$, we obtain:
    \begin{equation}
        \sum_{t \in T} X_{t,m} = \sum_{t \in T} \left( \sum_{t'=t - W_m +1}^{t} S_{t',m} \right). \label{eq:sum_X}
    \end{equation}
    Switching the order of summation on the right-hand side of Equation~\eqref{eq:sum_X}:
    \begin{align*}
        \sum_{t \in T} X_{t,m} &= \sum_{t \in T} \left( \sum_{t'=t - W_m +1}^{t} S_{t',m} \right) \\
        &= \sum_{t' \in T} S_{t',m} \left( \sum_{t = t'+W_m -1}^{t'} 1 \right).
    \end{align*}
    Note that for each $S_{t',m}$, the inner sum over $t$ counts $W_m$ periods (assuming $T$ is sufficiently large to accommodate the maintenance duration $W_m$). Therefore:
    \[
    \sum_{t \in T} X_{t,m} = W_m \sum_{t' \in T} S_{t',m}.
    \]
    Using Equation~\eqref{eq:7a}, we have $\sum_{t' \in T} S_{t',m} = 1$, so:
    \[
    \sum_{t \in T} X_{t,m} = W_m \times 1 = W_m.
    \]
    Thus, we have shown that $\sum_{t \in T} X_{t,m} = W_m$ for all $m \in M$, as required.
\end{proof}

\begin{pro}
Formulation~\eqref{eq:7} are strictly tighter than Formulation~\eqref{eq:1}.
\label{pro:1-7}
\end{pro}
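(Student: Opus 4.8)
The plan is to verify the two conditions required by Definition 3. First I would show that Formulation~\eqref{eq:7} is at least as tight as Formulation~\eqref{eq:1}, i.e. that every continuous point $(S_{t,m}, X_{t,m})$ feasible for the relaxation of~\eqref{eq:7} is also feasible for the relaxation of~\eqref{eq:1}. Then I would exhibit a single fractional point that is feasible for~\eqref{eq:1} but not for~\eqref{eq:7}, which establishes strictness.

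For the containment direction, I would take an arbitrary continuous $(S,X)$ satisfying \eqref{eq:7a}, \eqref{eq:7b} and the box constraints $0 \le S_{t,m}, X_{t,m} \le 1$, and check each constraint of~\eqref{eq:1} in turn. Constraint~\eqref{eq:1b} is literally \eqref{eq:7a}. Constraint~\eqref{eq:1a}, $\sum_{t} X_{t,m} = W_m$, is exactly the content of Lemma~\ref{lm:main}, which I would invoke directly. The only constraint requiring work is the linking inequality~\eqref{eq:1c}. Here I would substitute \eqref{eq:7b} into $X_{t,m} - X_{t-1,m}$: since the window defining $X_{t,m}$ is that of $X_{t-1,m}$ shifted forward by one period, the overlapping $S$-terms cancel and I obtain $X_{t,m} - X_{t-1,m} = S_{t,m} - S_{t-W_m,m}$. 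Because $S_{t-W_m,m} \ge 0$ in any relaxation point, this gives $X_{t,m} - X_{t-1,m} \le S_{t,m}$, which is~\eqref{eq:1c}. I would also note that the bounds transfer automatically: by~\eqref{eq:7b} each $X_{t,m}$ is a sum of nonnegative $S$-terms, hence $X_{t,m} \ge 0$, and since that window is a subset of the full horizon whose $S$-values sum to one, $X_{t,m} \le 1$.

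For strictness, I would give an explicit small instance with a separating fractional point. Taking one maintenance with $W_m = 2$ on a horizon $T = \{1,2,3,4\}$ and setting $X = (0.5,0.5,0.5,0.5)$ together with $S = (0.5,0.5,0,0)$, I would verify that all constraints of~\eqref{eq:1} hold: the sums give $\sum_t X_{t,m} = 2 = W_m$ and $\sum_t S_{t,m} = 1$, and the only binding instance of~\eqref{eq:1c} is at $t=1$, where $X_{1,m} - X_{0,m} = 0.5 = S_{1,m}$. Yet~\eqref{eq:7b} fails, since at $t=2$ it would force $X_{2,m} = S_{1,m} + S_{2,m} = 1 \ne 0.5$. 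This point therefore lies in the relaxation of~\eqref{eq:1} but outside that of~\eqref{eq:7}.

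The argument is largely mechanical, so the main obstacle is simply careful bookkeeping at the horizon boundaries — the conventions $X_{0,m} = 0$ and $S_{0,m} = 0$, together with the preprocessing $S_{t,m} = 0$ for $t \ge |T| - W_m + 2$ from Remark~2 — so that the window-difference identity used for~\eqref{eq:1c} and the feasibility checks for the separating point remain valid at the first and last periods. The only genuinely substantive step is recognizing that~\eqref{eq:1c} is implied by~\eqref{eq:7b} via the one-line telescoping identity; everything else follows from the shared mass constraints and nonnegativity.
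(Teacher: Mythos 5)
Your proposal is correct and follows essentially the same route as the paper's proof: containment via Lemma~\ref{lm:main}, the identical constraint~\eqref{eq:1b}$=$\eqref{eq:7a}, and the telescoping identity $X_{t,m}-X_{t-1,m}=S_{t,m}-S_{t-W_m,m}\leq S_{t,m}$ for~\eqref{eq:1c}, followed by an explicit fractional separating point on $T=\{1,2,3,4\}$ with $W_m=2$. Your counterexample $X=(0.5,0.5,0.5,0.5)$, $S=(0.5,0.5,0,0)$ differs numerically from the paper's $X=(\tfrac{1}{3},\tfrac{2}{3},\tfrac{2}{3},\tfrac{1}{3})$, $S=(\tfrac{2}{3},\tfrac{1}{3},0,0)$ but works for the same reason (the pair violates~\eqref{eq:7b} even though, as the paper also notes for its own point, the $X$-component alone admits a compatible $S$), so the arguments are interchangeable.
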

\begin{proof}
First, we show that Formulation~\eqref{eq:7} is at least as tight as Formulation~\eqref{eq:1} by demonstrating that any feasible solution to Formulation~\eqref{eq:7} also satisfies all constraints of Formulation~\eqref{eq:1}.

\textbf{Constraints Equivalence and Satisfaction:}
\begin{itemize}
    \item \textbf{Constraints~\eqref{eq:1d} and \eqref{eq:1e}} specify the bounds on variables:
    \[
    0 \leq X_{t,m} \leq 1, \quad 0 \leq S_{t,m} \leq 1 \quad \forall t \in T, m \in M,
    \]
    which are the same in both formulations and are trivially satisfied.
    
    \item \textbf{Constraint~\eqref{eq:1b}}:
    \[
    \sum_{t \in T} S_{t,m} = 1 \quad \forall m \in M,
    \]
    is identical to \textbf{Constraint~\eqref{eq:7a}} in Formulation~\eqref{eq:7}, so it is satisfied.
    
    \item \textbf{Constraint~\eqref{eq:1a}}:
    \[
    \sum_{t \in T} X_{t,m} = W_m \quad \forall m \in M,
    \]
    is satisfied by \textbf{Lemma~\ref{lm:main}} under Formulation~\eqref{eq:7}.
\end{itemize}

\textbf{Satisfying Constraint~\eqref{eq:1c}:} We need to show that:
\[
X_{t,m} - X_{t-1,m} \leq S_{t,m} \quad \forall t \in T, m \in M.
\]
Under Formulation~\eqref{eq:7}, using \textbf{Constraint~\eqref{eq:7b}}:

\[
X_{t,m} - X_{t-1,m} = \left( \sum_{t'=t - W_m +1}^{t} S_{t',m} \right) - \left( \sum_{t'=t - W_m}^{t - 1} S_{t',m} \right) \\
= S_{t,m} - S_{t-W_m,m} \leq S_{t,m}.
\]

Thus,
\[
X_{t,m} - X_{t-1,m} \leq S_{t,m},
\]

which satisfies Constraint~\eqref{eq:1c}.

Therefore, all constraints of Formulation~\eqref{eq:1} are satisfied by any feasible solution of Formulation~\eqref{eq:7}.

\vspace{0.5em}

Next, we show that Formulation~\eqref{eq:6} is strictly tighter by providing a fractional solution that is feasible for the continuous relaxation of Formulation~\eqref{eq:1} but infeasible for Formulation~\eqref{eq:7}.

\textbf{Counterexample:} Consider an instance with $T = \{1, 2, 3, 4\}$ and a maintenance task $m$ with duration $W_m = 2$. Define the fractional solution:
\[
X_{t,m} = \left( \tfrac{1}{3}, \tfrac{2}{3}, \tfrac{2}{3}, \tfrac{1}{3} \right), \quad S_{t,m} = \left( \tfrac{2}{3}, \tfrac{1}{3}, 0, 0 \right).
\]
This solution satisfies the constraints of Formulation~\eqref{eq:1}:
\begin{itemize}
    \item \textbf{Constraint~\eqref{eq:1a}}:
    \[
    \sum_{t \in T} X_{t,m} = \tfrac{1}{3} + \tfrac{2}{3} + \tfrac{2}{3} + \tfrac{1}{3} = 2 = W_m.
    \]
    \item \textbf{Constraint~\eqref{eq:1b}}:
    \[
    \sum_{t \in T} S_{t,m} = \tfrac{2}{3} + \tfrac{1}{3} = 1.
    \]
    \item \textbf{Constraint~\eqref{eq:1c}}:
    \begin{align*}
    X_{1,m} - X_{0,m} &= \tfrac{1}{3} - 0 = \tfrac{1}{3} \leq S_{1,m} = \tfrac{2}{3}, \\
    X_{2,m} - X_{1,m} &= \tfrac{2}{3} - \tfrac{1}{3} = \tfrac{1}{3} \leq S_{2,m} = \tfrac{1}{3}, \\
    X_{3,m} - X_{2,m} &= \tfrac{2}{3} - \tfrac{2}{3} = 0 \leq S_{3,m} = 0, \\
    X_{4,m} - X_{3,m} &= \tfrac{1}{3} - \tfrac{2}{3} = -\tfrac{1}{3} \leq S_{4,m} = 0.
    \end{align*}
    \item \textbf{Constraints~\eqref{eq:1d} and \eqref{eq:1e}}: All variables are within their bounds $[0, 1]$.
\end{itemize}

However, this solution does not satisfy \textbf{Constraint~\eqref{eq:7b}}, which requires:
\[
X_{t,m} = \sum_{t'=t - W_m +1}^{t} S_{t',m} \quad \forall t \in T.
\]
Checking for $t = 2$:
\[
X_{2,m} = \tfrac{2}{3}, \quad \sum_{t'=1}^{2} S_{t',m} = \tfrac{2}{3} + \tfrac{1}{3} = 1 \neq \tfrac{2}{3}.
\]
Similarly, for $t = 3$:
\[
X_{3,m} = \tfrac{2}{3}, \quad \sum_{t'=2}^{3} S_{t',m} = \tfrac{1}{3} + 0 = \tfrac{1}{3} \neq \tfrac{2}{3}.
\]
Therefore, the solution is infeasible for Formulation~\eqref{eq:6}.

This demonstrates that there exists a feasible solution to Formulation~\eqref{eq:1} that is infeasible for Formulation~\eqref{eq:6}, proving that Formulation~\eqref{eq:6} is strictly tighter than Formulation~\eqref{eq:1}.
\end{proof}

\begin{cor}
Formulation~\eqref{eq:6} is strictly tighter than Formulation~\eqref{eq:1}.
\label{cor:1-6}
\end{cor}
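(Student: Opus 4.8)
The plan is to leverage the equivalence between Formulation~\eqref{eq:6} and Formulation~\eqref{eq:7} established when Formulation~\eqref{eq:6} was introduced, together with Proposition~\ref{pro:1-7}. Since the paper already records that these two formulations have identical linear relaxations and are treated as the same object for this analysis, the tightness relationship of Formulation~\eqref{eq:7} relative to Formulation~\eqref{eq:1} should transfer verbatim to Formulation~\eqref{eq:6}.

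First I would recall that Formulation~\eqref{eq:6} represents the execution variable only implicitly, via the substitution $X_{t,m} = \sum_{t'=t-W_m+1}^{t} S_{t',m}$, which is exactly Constraint~\eqref{eq:7b} made explicit. Thus, under the generous interpretation of Remark 2 (in which $X$ is reconstructed from $S$), every continuous solution feasible for Formulation~\eqref{eq:6} corresponds to a continuous solution feasible for Formulation~\eqref{eq:7} in the joint $(S,X)$ space, and conversely. The two linear-relaxation feasible regions therefore coincide once projected onto the common variable space.

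Next I would invoke Proposition~\ref{pro:1-7}, which states that Formulation~\eqref{eq:7} is strictly tighter than Formulation~\eqref{eq:1}: every continuous solution of Formulation~\eqref{eq:7} satisfies Formulation~\eqref{eq:1}, and the counterexample in that proof exhibits a fractional solution feasible for Formulation~\eqref{eq:1} but infeasible for Formulation~\eqref{eq:7}. Because the linear relaxation of Formulation~\eqref{eq:6} is identical to that of Formulation~\eqref{eq:7}, the same inclusion and the same counterexample apply unchanged: all continuous solutions of Formulation~\eqref{eq:6} satisfy Formulation~\eqref{eq:1}, while the given fractional point witnesses that the inclusion is strict.

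The main obstacle is a bookkeeping subtlety rather than a genuine difficulty: the comparison must respect Definition 1 despite the differing variable sets, since Formulation~\eqref{eq:6} carries only $S$ whereas Formulation~\eqref{eq:1} carries both $S$ and $X$. I would resolve this by fixing the convention (already adopted implicitly at the end of the proof of Proposition~\ref{pro:1-7}, which itself concludes with a statement about Formulation~\eqref{eq:6}) that $X$ is defined from $S$ through the summation above, so that all three formulations are compared in the same $(S,X)$ coordinates. With this convention in place, the corollary follows immediately from Proposition~\ref{pro:1-7} and the stated equivalence of Formulations~\eqref{eq:6} and~\eqref{eq:7}.
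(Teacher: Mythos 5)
Your proposal is correct and matches the paper's own route: the paper states Corollary~\ref{cor:1-6} without separate proof, treating it as immediate from Proposition~\ref{pro:1-7} together with the declared fact that Formulations~\eqref{eq:6} and~\eqref{eq:7} have identical linear relaxations (indeed, the proof of Proposition~\ref{pro:1-7} already refers to the counterexample as infeasible for Formulation~\eqref{eq:6} interchangeably with~\eqref{eq:7}). Your extra care about comparing the formulations in a common $(S,X)$ space via $X_{t,m} = \sum_{t'=t-W_m+1}^{t} S_{t',m}$ is a sound way to make precise what the paper leaves implicit.
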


Note that for that the counterexample used for the proof of Proposition~\ref{pro:1-7}, if we maintain the provided solution $X=(\frac{1}{3},\frac{2}{3},\frac{2}{3},\frac{1}{3})$, there is a solution $S$ that satisfies \eqref{eq:7}, however, not this particular provided solution $S$ that satisfies only the Formulation~\eqref{eq:1}.

\begin{lm}
    \label{lm:x_t+j}
    Under Formulation~\eqref{eq:6} and Formulation~\eqref{eq:7}, it holds that:
    \[
    X_{t,m} - X_{t-1,m} \leq X_{t+j,m} \quad \forall m \in M, \quad \forall j \in \{0,\dots,W_m - 1\}.
    \]
\end{lm}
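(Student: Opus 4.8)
The plan is to leverage the same algebraic identity established in the proof of Proposition~\ref{pro:1-7}. Working under Formulation~\eqref{eq:7}, I would first substitute the definition of the execution variable from Constraint~\eqref{eq:7b} into the difference $X_{t,m}-X_{t-1,m}$. As already computed there, the telescoping of the two overlapping windows of width $W_m$ yields
\[
X_{t,m}-X_{t-1,m} = S_{t,m}-S_{t-W_m,m}.
\]
Since the continuous relaxation retains the nonnegativity bound $S_{t,m}\geq 0$ coming from the relaxed Constraint~\eqref{eq:7d}, the subtracted term $S_{t-W_m,m}$ is nonnegative, so immediately $X_{t,m}-X_{t-1,m}\leq S_{t,m}$.

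Next I would expand the target $X_{t+j,m}$ on the right-hand side, again via Constraint~\eqref{eq:7b}:
\[
X_{t+j,m}=\sum_{t'=t+j-W_m+1}^{t+j} S_{t',m}.
\]
The crux of the argument --- and the only step requiring genuine care --- is the index bookkeeping: I must verify that the index $t'=t$ lies inside the summation window $\{t+j-W_m+1,\dots,t+j\}$ for every $j\in\{0,\dots,W_m-1\}$. The upper bound $t\leq t+j$ holds because $j\geq 0$, while the lower bound $t\geq t+j-W_m+1$ rearranges to $j\leq W_m-1$, which is exactly the stated range. Hence $S_{t,m}$ is always one of the summands.

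Because every summand is nonnegative, discarding all terms except $S_{t,m}$ gives $X_{t+j,m}\geq S_{t,m}$. Chaining this with the first inequality yields
\[
X_{t,m}-X_{t-1,m}\leq S_{t,m}\leq X_{t+j,m},
\]
which is precisely the claim. I expect no serious obstacle beyond the window-membership check above; once that index is confirmed to lie in the range, the nonnegativity of $S$ closes the argument for all admissible $j$ simultaneously, and the result transfers to Formulation~\eqref{eq:6} since the two formulations share the same linear relaxation.
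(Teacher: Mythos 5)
Your proof is correct and follows essentially the same route as the paper's: the telescoping identity $X_{t,m}-X_{t-1,m}=S_{t,m}-S_{t-W_m,m}\leq S_{t,m}$ from Constraint~\eqref{eq:7b}, followed by the observation that nonnegativity of $S$ and the index check $j\leq W_m-1$ make $S_{t,m}$ a summand of $X_{t+j,m}$. The only cosmetic difference is that the paper inserts the intermediate sum $\sum_{t'=t}^{t+j}S_{t',m}$ between $S_{t,m}$ and $X_{t+j,m}$, whereas you go directly via window membership --- the argument is the same.
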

\begin{proof}
    From \textbf{Constraint~\eqref{eq:7b}} of Formulation~\eqref{eq:7}, we have:
    \[
    X_{t,m} = \sum_{t' = t - W_m + 1}^{t} S_{t',m} \quad \forall t \in T, m \in M.
    \]
    Therefore, the difference is:
    \begin{align*}
    X_{t,m} - X_{t-1,m} &= \left( \sum_{t' = t - W_m + 1}^{t} S_{t',m} \right) - \left( \sum_{t' = t - W_m }^{t - 1} S_{t',m} \right) \\
    &\leq S_{t,m}.
    \end{align*}
    Since $S_{t',m}$ variables are non-negative, we have:
    \[
    S_{t,m} \leq \sum_{t' = t}^{t + j} S_{t',m} \quad \forall j \in \{0,\dots,W_m - 1\}.
    \]
    Note that for \( j \in \{0,\dots,W_m - 1\} \), the indices satisfy:
    \[
    t - W_m + 1 + j \leq t + j.
    \]
    Thus, the sum \( \sum_{t' = t}^{t + j} S_{t',m} \) is a subset of the terms in \( X_{t + j,m} \):
    \[
    X_{t + j,m} = \sum_{t' = t + j - W_m + 1}^{t + j} S_{t',m} \geq \sum_{t' = t}^{t + j} S_{t',m}.
    \]
    Therefore, combining the inequalities:
    \[
    X_{t,m} - X_{t-1,m} \leq S_{t,m} \leq \sum_{t' = t}^{t + j} S_{t',m} \leq X_{t + j,m}.
    \]
    Hence,
    \[
    X_{t,m} - X_{t-1,m} \leq X_{t + j,m} \quad \forall m \in M, \quad \forall j \in \{0,\dots,W_m - 1\}.
    \]
\end{proof}

\begin{pro}
\label{pro:2-7}
Formulation~\eqref{eq:7} is strictly tighter than Formulation~\eqref{eq:2}.
\end{pro}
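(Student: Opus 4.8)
The plan is to establish the two directions required by Definition~3 separately: first that Formulation~\eqref{eq:7} is at least as tight as Formulation~\eqref{eq:2}, and then that the containment is strict.

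For the first direction, I would take an arbitrary (possibly fractional) solution $(S,X)$ feasible for Formulation~\eqref{eq:7} and verify each constraint of Formulation~\eqref{eq:2}. The bound constraint~\eqref{eq:2c} is identical to~\eqref{eq:7c} and holds trivially. Constraint~\eqref{eq:2a} is exactly the conclusion of Lemma~\ref{lm:main}, so it is already available. The only real content is Constraint~\eqref{eq:2b}, namely $X_{t,m} - X_{t-1,m} \le X_{tl,m}$ for every $tl \in \{t,\dots,t+W_m-1\}$; writing $tl = t+j$ with $j \in \{0,\dots,W_m-1\}$, this is precisely the statement of Lemma~\ref{lm:x_t+j}. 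Hence every feasible solution of~\eqref{eq:7} satisfies all constraints of~\eqref{eq:2}, giving the first direction with essentially no work beyond the two lemmas.

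For strictness, I would exhibit a fractional $X$ that is feasible for Formulation~\eqref{eq:2} but admits no companion $S$ making $(S,X)$ feasible for Formulation~\eqref{eq:7}. My candidate is the instance $T = \{1,2,3,4\}$, $W_m = 2$ with $X_{t,m} = (0, \tfrac{2}{3}, \tfrac{2}{3}, \tfrac{2}{3})$: it has $\sum_{t} X_{t,m} = 2 = W_m$, and a direct check of the window inequalities $X_{t,m} - X_{t-1,m} \le X_{t+j,m}$ (recalling $X_{0,m} = 0$ and that out-of-range terms vanish) shows it satisfies~\eqref{eq:2}. The key step is to argue it is not in the projection of~\eqref{eq:7}: since Constraint~\eqref{eq:7b} for $W_m = 2$ reads $X_{1,m} = S_{1,m}$ and $X_{t,m} = S_{t-1,m} + S_{t,m}$, this is a triangular linear system that determines $S$ uniquely from $X$, yielding $S_{t,m} = (0, \tfrac{2}{3}, 0, \tfrac{2}{3})$. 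This forced $S$ has $\sum_{t} S_{t,m} = \tfrac{4}{3} \ne 1$, violating~\eqref{eq:7a} (and moreover $S_{4,m} \ne 0$ violates the boundary preprocessing of Remark~2), so no feasible extension exists.

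The main obstacle is the strictness argument, not the tightness inclusion. Unlike Proposition~\ref{pro:1-7}, where both formulations carry the $S$ variables and it suffices to exhibit a single bad $(S,X)$ pair, here Formulation~\eqref{eq:2} lives in $X$-space only, so I must rule out \emph{every} possible $S$. The observation that makes this tractable is that~\eqref{eq:7b} pins down $S$ uniquely given $X$, reducing the quantifier over all $S$ to a single check. The accompanying trap to avoid is reusing the counterexample $X = (\tfrac13,\tfrac23,\tfrac23,\tfrac13)$ from Proposition~\ref{pro:1-7}: as noted after Corollary~\ref{cor:1-6}, that $X$ does extend to a valid $S$ for~\eqref{eq:7}, so it fails to separate the two formulations. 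The working counterexample instead exploits a maintenance block that runs off the end of the horizon, a configuration that~\eqref{eq:2} tolerates through truncated out-of-range terms but that~\eqref{eq:7} forbids via $\sum_{t} S_{t,m} = 1$ together with the boundary condition.
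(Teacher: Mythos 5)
Your proposal is correct, and its first half coincides with the paper's proof: the inclusion direction is dispatched exactly as in the paper, via Lemma~\ref{lm:main} for Constraint~\eqref{eq:2a} and Lemma~\ref{lm:x_t+j} (with $tl = t+j$) for Constraint~\eqref{eq:2b}. Where you genuinely diverge is the strictness argument. The paper uses a larger counterexample ($T=\{1,\dots,10\}$, $W_m=3$, with $X$ alternating between $\tfrac13$ and $\tfrac14$) and then merely \emph{asserts} that no companion $S$ exists, saying the values ``cannot be expressed as cumulative sums of $S_{t,m}$,'' without proof. Your argument closes that gap: you observe that Constraint~\eqref{eq:7b} is an equality whose coefficient matrix is unitriangular in $S$ (since $X_{1,m}=S_{1,m}$ and $S_{t,m}=X_{t,m}-S_{t-1,m}$ by forward substitution), so $S$ is \emph{uniquely} determined by $X$, reducing the universal quantifier over all $S$ to a single evaluation. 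For your instance $X=(0,\tfrac23,\tfrac23,\tfrac23)$ with $W_m=2$ this forces $S=(0,\tfrac23,0,\tfrac23)$, which violates~\eqref{eq:7a} (sum $\tfrac43$) and the boundary condition of Remark~2 ($S_{4,m}\neq 0$ with $|T|-W_m+2=4$); I verified that your $X$ does satisfy all windows of~\eqref{eq:2b}, including the truncated ones at $t=3,4$ where $X_{5,m}=0$, since the increments there are $0$. Note that the same triangular-system computation applied to the paper's counterexample yields $S=(\tfrac13,0,0,\tfrac14,0,\tfrac1{12},\tfrac14,0,0,\tfrac14)$ with sum $\tfrac76\neq 1$, so your technique also retroactively rigorizes the paper's own example. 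What each approach buys: the paper's example is horizon-interior in flavor and is reused verbatim to prove Proposition~\ref{pro:5-7}, whereas your example is minimal (four periods) and your infeasibility argument is airtight rather than asserted; your caution in not recycling the $X=(\tfrac13,\tfrac23,\tfrac23,\tfrac13)$ solution from Proposition~\ref{pro:1-7} is exactly right and matches the paper's remark after Corollary~\ref{cor:1-6}.
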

\begin{proof}
First, we show that Formulation~\eqref{eq:7} is at least as tight as Formulation~\eqref{eq:2} by demonstrating that any feasible solution to Formulation~\eqref{eq:7} also satisfies all constraints of Formulation~\eqref{eq:2}.

\textbf{Constraints Equivalence and Satisfaction:}
\begin{itemize}
    \item \textbf{Constraint~\eqref{eq:2c}} specifies the bounds on variables:
    \[
    0 \leq X_{t,m} \leq 1 \quad \forall t \in T, m \in M,
    \]
    which is also defined in Constraint~\eqref{eq:7c}
    
    \item \textbf{Constraint~\eqref{eq:2a}}:
    \[
    \sum_{t \in T} X_{t,m} = W_m \quad \forall m \in M,
    \]
    is satisfied by \textbf{Lemma~\ref{lm:main}} under Formulation~\eqref{eq:7}.
\end{itemize}

\textbf{Satisfying Constraint~\eqref{eq:2b}:} Direct consequence of Lemma~\ref{lm:x_t+j}

Thus, all constraints of Formulation~\eqref{eq:2} are satisfied by any feasible solution of Formulation~\eqref{eq:7}.

\vspace{0.5em}

Next, we show that Formulation~\eqref{eq:7} is strictly tighter by providing a fractional solution that is feasible for Formulation~\eqref{eq:2} but infeasible for Formulation~\eqref{eq:6}.

\textbf{Counterexample:} Consider an instance with $T = \{1, 2, \dots, 10\}$ and a maintenance task $m$ with duration $W_m = 3$. Define the fractional solution:
\[
X_{t,m} = 
\begin{cases}
\frac{1}{3}, & \text{for } t = 1, 2, 3, 6, 7, 8, \\
\frac{1}{4}, & \text{for } t = 4, 5, 9, 10.
\end{cases}
\]
This solution satisfies the constraints of Formulation~\eqref{eq:2}:
\begin{itemize}
    \item \textbf{Constraint~\eqref{eq:2a}}:
    \[
    \sum_{t \in T} X_{t,m} = 6 \times \tfrac{1}{3} + 4 \times \tfrac{1}{4} = 2 + 1 = 3 = W_m.
    \]
    \item \textbf{Constraint~\eqref{eq:2b}}:
    For all $t \in T$, we can verify that:
    \[
    X_{t,m} - X_{t-1,m} \leq X_{tl,m}.
    \]
    For example, at $t=1$:
    \[
    X_{1,m} - X_{0,m} = \tfrac{1}{3} - 0 = \tfrac{1}{3} \leq X_{1,m} = \tfrac{1}{3}.
    \]
    \[
    X_{1,m} - X_{0,m} = \tfrac{1}{3} - 0 = \tfrac{1}{3} \leq X_{2,m} = \tfrac{1}{3}.
    \]
    \[
    X_{1,m} - X_{0,m} = \tfrac{1}{3} - 0 = \tfrac{1}{3} \leq X_{3,m} = \tfrac{1}{3}.
    \]
    Similar calculations can be done for other $t$.
    \item \textbf{Constraint~\eqref{eq:2c}}: All variables are within their bounds $[0, 1]$.
\end{itemize}

However, there cannot be a solution $S_{t,m}$ that satisfies both \textbf{Constraint~\eqref{eq:7a}}:
\[
\sum_{t \in T} S_{t,m} = 1,
\]
and \textbf{Constraint~\eqref{eq:7b}}:
\[
X_{t,m} = \sum_{t'=t - W_m +1}^{t} S_{t',m} \quad \forall t \in T,
\]
because the given $X_{t,m}$ values do not correspond to any $S_{t,m}$ satisfying these constraints. Specifically, the varying values of $X_{t,m}$ cannot be expressed as cumulative sums of $S_{t,m}$ over fixed durations in a way that sums $S_{t,m}$ to 1.

Therefore, this fractional solution is feasible for Formulation~\eqref{eq:2} but infeasible for Formulation~\eqref{eq:7}, proving that Formulation~\eqref{eq:7} is strictly tighter than Formulation~\eqref{eq:2}.
\end{proof}

\begin{pro}
\label{pro:2-6}
Formulation~\eqref{eq:6} is strictly tighter than Formulation~\eqref{eq:2}.
\end{pro}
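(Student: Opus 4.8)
The plan is to derive this proposition directly from Proposition~\ref{pro:2-7} together with the equivalence of Formulation~\eqref{eq:6} and Formulation~\eqref{eq:7} established earlier. Recall that the paper declares these two formulations to have identical linear relaxations and to be treated as the same object for tightness analysis; in the language of Definition~2, Formulation~\eqref{eq:6} is equivalent to Formulation~\eqref{eq:7}. This is because in Formulation~\eqref{eq:6} the execution quantity is simply the implicit expression $\sum_{t'=t-W_m+1}^{t} S_{t',m}$, which is exactly what Constraint~\eqref{eq:7b} defines explicitly, so their continuous feasible regions coincide once projected onto the space of the $X$ variables.

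First I would record the transitivity principle I intend to use: if $A$ is equivalent to $B$ and $B$ is strictly tighter than $C$, then $A$ is strictly tighter than $C$. This follows immediately from Definition~2 and Definition~3, since the set of continuous solutions satisfying $A$ equals the set satisfying $B$; hence $A$ inherits both the ``at least as tight'' relation that $B$ has over $C$ and the existence of a fractional point satisfying $C$ but not $B$.

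Next I would apply this principle with $A$ being Formulation~\eqref{eq:6}, $B$ being Formulation~\eqref{eq:7}, and $C$ being Formulation~\eqref{eq:2}. By Proposition~\ref{pro:2-7}, Formulation~\eqref{eq:7} is strictly tighter than Formulation~\eqref{eq:2}, and by the equivalence above Formulation~\eqref{eq:6} is equivalent to Formulation~\eqref{eq:7}; hence Formulation~\eqref{eq:6} is strictly tighter than Formulation~\eqref{eq:2}, which is the claim. Concretely, the very same fractional point used in the counterexample of Proposition~\ref{pro:2-7} --- the schedule on $T=\{1,\dots,10\}$ with $W_m=3$ taking the values $\tfrac{1}{3}$ and $\tfrac{1}{4}$ --- is feasible for Formulation~\eqref{eq:2} yet admits no $S$ realizing it under Formulations~\eqref{eq:6}--\eqref{eq:7}, so it witnesses strictness here as well.

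I do not expect a genuine obstacle, since the heavy lifting was already done in Proposition~\ref{pro:2-7} and its counterexample. The only point requiring care is to make explicit that the equivalence of \eqref{eq:6} and \eqref{eq:7} is understood as the coincidence of their projected feasible regions in the space of the $X$ variables, so that both directions of Definition~2 transfer cleanly to the comparison against \eqref{eq:2}. Once this is stated, the result is essentially a one-line corollary of Proposition~\ref{pro:2-7}.
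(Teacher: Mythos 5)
Your proposal is correct and matches the paper's intent: the paper states Proposition~\ref{pro:2-6} without a separate proof, relying precisely on the earlier declaration that Formulations~\eqref{eq:6} and \eqref{eq:7} have identical linear relaxations together with Proposition~\ref{pro:2-7} and its counterexample. Your only addition is to spell out the transitivity step and the projection onto the $X$-space, which makes explicit what the paper leaves implicit but does not change the argument.
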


The exact same proof of Proposition~\ref{pro:2-7} can be used to prove Coroloary~\ref{cor:3-7}

\begin{cor}
\label{cor:3-7}
Formulation~\eqref{eq:7} is strictly tighter than Formulation~\eqref{eq:3}.
\end{cor}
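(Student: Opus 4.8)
The plan is to mirror the structure of the proof of Proposition~\ref{pro:2-7}, exploiting the fact that Constraint~\eqref{eq:3b} is precisely Constraint~\eqref{eq:2b} restricted to the single index $tl = t + W_m - 1$. Consequently Formulation~\eqref{eq:3} is a relaxation of Formulation~\eqref{eq:2} — it retains fewer of the non-preemption inequalities — and it is exactly this observation that lets the earlier argument transfer without modification.

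First I would establish that Formulation~\eqref{eq:7} is at least as tight as Formulation~\eqref{eq:3} by checking that every (possibly fractional) solution of Formulation~\eqref{eq:7} satisfies each constraint of Formulation~\eqref{eq:3}. Constraint~\eqref{eq:3c} coincides with the bound~\eqref{eq:7c} and is immediate; Constraint~\eqref{eq:3a} is a direct restatement of Lemma~\ref{lm:main}; and Constraint~\eqref{eq:3b} is the special case $j = W_m - 1$ of Lemma~\ref{lm:x_t+j}, which delivers $X_{t,m} - X_{t-1,m} \leq X_{t+j,m}$ for every $j \in \{0,\dots,W_m - 1\}$. Thus this first direction reduces to a one-line invocation of the two lemmas, precisely as in Proposition~\ref{pro:2-7}.

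For strictness I would reuse verbatim the counterexample from the proof of Proposition~\ref{pro:2-7}: the instance $T = \{1,\dots,10\}$, $W_m = 3$, with $X_{t,m} = \tfrac{1}{3}$ for $t \in \{1,2,3,6,7,8\}$ and $X_{t,m} = \tfrac{1}{4}$ for $t \in \{4,5,9,10\}$. Since that solution was shown to satisfy the full family of inequalities~\eqref{eq:2b}, and Constraint~\eqref{eq:3b} is merely the $tl = t+W_m-1$ sub-family of~\eqref{eq:2b}, the solution automatically satisfies~\eqref{eq:3b}; combined with~\eqref{eq:3a} and~\eqref{eq:3c} (already verified in Proposition~\ref{pro:2-7}) it is feasible for Formulation~\eqref{eq:3}. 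The same argument given there shows that no $S_{t,m}$ can reproduce these $X$ values as the required cumulative sums while summing to one, so the solution remains infeasible for Formulation~\eqref{eq:7}, yielding strictness in the sense of Definition~3.

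The one point requiring care — and the step I would flag as the main obstacle — is confirming the direction of containment: one must verify that dropping inequalities moves from Formulation~\eqref{eq:2} to the \emph{looser} Formulation~\eqref{eq:3}, so that feasibility of the counterexample is inherited rather than lost. Once this is observed, no new computation is needed and the corollary follows exactly as stated.
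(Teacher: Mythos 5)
Your proof is correct and takes exactly the paper's approach: the paper simply remarks that the proof of Proposition~\ref{pro:2-7} carries over unchanged, which is precisely what you execute, including the key observation that Constraint~\eqref{eq:3b} is the $tl = t + W_m - 1$ sub-family of~\eqref{eq:2b}, so the counterexample's feasibility for Formulation~\eqref{eq:2} is inherited by Formulation~\eqref{eq:3} while its infeasibility for Formulation~\eqref{eq:7} is unaffected. No gaps.
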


\begin{cor}
\label{cor:3-6}
Formulation~\eqref{eq:6} is strictly tighter than Formulation~\eqref{eq:3}.
\end{cor}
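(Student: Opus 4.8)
The plan is to prove Corollary~\ref{cor:3-6} with the same two-step template used for the preceding results, exploiting the stated fact that Formulation~\eqref{eq:6} and Formulation~\eqref{eq:7} have an identical linear relaxation. Thus it suffices to argue everything about Formulation~\eqref{eq:7} and then transfer the conclusion verbatim to Formulation~\eqref{eq:6}. The first step establishes that Formulation~\eqref{eq:7} is at least as tight as Formulation~\eqref{eq:3}; the second exhibits a fractional point feasible for the relaxation of Formulation~\eqref{eq:3} but infeasible for Formulation~\eqref{eq:7}. In fact, both ingredients are already available, so the corollary follows immediately from the equivalence of Formulations~\eqref{eq:6} and~\eqref{eq:7} together with Corollary~\ref{cor:3-7}; below I also give the self-contained route for robustness.

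For the \textbf{at-least-as-tight} direction, I would take an arbitrary (possibly fractional) solution $(S_{t,m},X_{t,m})$ feasible for the relaxation of Formulation~\eqref{eq:7} and verify each constraint of Formulation~\eqref{eq:3}. Constraint~\eqref{eq:3a} is immediate from Lemma~\ref{lm:main}. Constraint~\eqref{eq:3b}, namely $X_{t,m}-X_{t-1,m}\leq X_{t+W_m-1,m}$, is exactly the instance $j=W_m-1$ of Lemma~\ref{lm:x_t+j}, so no fresh computation is needed. The variable bounds \eqref{eq:3c} coincide with \eqref{eq:7c}. Hence every relaxed-feasible point of Formulation~\eqref{eq:7} satisfies Formulation~\eqref{eq:3}.

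For \textbf{strictness}, I would reuse the counterexample from the proof of Proposition~\ref{pro:2-7} on $T=\{1,\dots,10\}$ with $W_m=3$. The key observation is that Constraint~\eqref{eq:3b} is precisely the single case $tl=t+W_m-1$ of Constraint~\eqref{eq:2b}, so the relaxed feasible region of Formulation~\eqref{eq:3} contains that of Formulation~\eqref{eq:2}; consequently any point feasible for Formulation~\eqref{eq:2} is feasible for Formulation~\eqref{eq:3}. Since that counterexample is feasible for Formulation~\eqref{eq:2} yet admits no $S$ satisfying both \eqref{eq:7a} and \eqref{eq:7b}, it separates the two relaxations, and transferring through the equivalence of Formulations~\eqref{eq:6} and~\eqref{eq:7} yields the claim. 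I expect the only genuinely delicate point---the \emph{main obstacle}---to be justifying cleanly why the borrowed point is infeasible for Formulation~\eqref{eq:7}, which Proposition~\ref{pro:2-7} states only informally. I would make it rigorous by noting that \eqref{eq:7b} is equivalent to the recurrence $S_{t,m}=X_{t,m}-X_{t-1,m}+S_{t-W_m,m}$ with $S_{t',m}=0$ for $t'\leq 0$; this triangular system determines $S$ uniquely from the prescribed $X$, and one then checks that the induced $S$ fails the normalization $\sum_{t\in T}S_{t,m}=1$, so no feasible $S$ exists.
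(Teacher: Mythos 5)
Your proof is correct and follows essentially the paper's own route: the paper obtains Corollary~\ref{cor:3-6} from the stated identity of the linear relaxations of Formulations~\eqref{eq:6} and~\eqref{eq:7} together with Corollary~\ref{cor:3-7}, which it in turn justifies by reusing the proof of Proposition~\ref{pro:2-7} exactly as you do (Lemma~\ref{lm:x_t+j} with $j = W_m-1$ gives \eqref{eq:3b}, and the borrowed counterexample transfers because the constraints of \eqref{eq:3} are a subset of those of \eqref{eq:2}). Your recurrence $S_{t,m}=X_{t,m}-X_{t-1,m}+S_{t-W_m,m}$ is a welcome rigorization of the paper's informal infeasibility claim: it determines the unique candidate $S_{\cdot,m}=\left(\tfrac13,0,0,\tfrac14,0,\tfrac1{12},\tfrac14,0,0,\tfrac14\right)$, whose sum $\tfrac{7}{6}\neq 1$ indeed violates \eqref{eq:7a}.
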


\begin{pro}
\label{pro:4-7}
Formulation \eqref{eq:7} is equivalent to Formulation~\eqref{eq:4}.
\end{pro}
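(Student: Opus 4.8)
The plan is to invoke Definition~2 and prove the two containments separately: that Formulation~\eqref{eq:7} is at least as tight as Formulation~\eqref{eq:4}, and conversely that Formulation~\eqref{eq:4} is at least as tight as Formulation~\eqref{eq:7}. I expect the first direction to be routine and the second to carry the real content.

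For the easy direction, I would take any continuous solution of Formulation~\eqref{eq:7} and check each constraint of Formulation~\eqref{eq:4}. Constraint~\eqref{eq:4b} is literally \eqref{eq:7a}, and the variable bounds coincide. Constraint~\eqref{eq:4a} follows immediately from Lemma~\ref{lm:main}. For Constraint~\eqref{eq:4c}, note that under \eqref{eq:7b} the inequality $\sum_{t'=t-W_m+1}^{t} S_{t',m} \geq X_{t,m}$ holds with equality, so it is trivially satisfied. Thus every continuous solution of \eqref{eq:7} satisfies \eqref{eq:4}.

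For the reverse direction I would start from a continuous solution of Formulation~\eqref{eq:4} and verify \eqref{eq:7}. Constraint~\eqref{eq:7a} is \eqref{eq:4b} and the bounds match, so the only real task is to upgrade the inequality \eqref{eq:4c} to the equality \eqref{eq:7b}. The key step is an aggregation argument: writing $Y_{t,m} := \sum_{t'=t-W_m+1}^{t} S_{t',m}$, Constraint~\eqref{eq:4c} gives $X_{t,m} \le Y_{t,m}$ for every $t$. Summing $Y_{t,m}$ over $t$ and switching the order of summation exactly as in Lemma~\ref{lm:main} yields $\sum_{t\in T} Y_{t,m} = W_m \sum_{t'\in T} S_{t',m} = W_m$, while Constraint~\eqref{eq:4a} gives $\sum_{t\in T} X_{t,m} = W_m$. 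Hence the two aggregate sums agree while the terms obey $X_{t,m}\le Y_{t,m}$ pointwise, which forces $X_{t,m}=Y_{t,m}$ for all $t$; this is precisely \eqref{eq:7b}.

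The main obstacle is this pointwise-from-aggregate step, and in particular making the summation switch rigorous at the horizon boundaries. I would lean on Remark~2 (the preprocessing $S_{t,m}=0$ for $t \ge |T|-W_m+2$) together with the implicit convention $S_{t,m}=0$ for $t\le 0$, so that each $S_{t',m}$ with nonzero support is counted in exactly $W_m$ outer terms and no boundary term is lost; this is what makes $\sum_{t} Y_{t,m}$ equal $W_m$ rather than something smaller. Once the two totals are shown equal, the implication ``termwise $\le$ plus equal totals $\Rightarrow$ termwise $=$'' closes the argument, and combining both directions establishes equivalence.
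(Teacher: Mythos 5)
Your proposal is correct and takes essentially the same route as the paper: the forward direction is verified identically, and your aggregation step---summing the inequality \eqref{eq:4c} over $t$, switching the order of summation as in Lemma~\ref{lm:main} to show both totals equal $W_m$, and concluding termwise equality from a termwise $\leq$ with equal aggregates---is precisely how the paper upgrades \eqref{eq:4c} to the equality \eqref{eq:7b}. Your explicit care with the horizon boundaries via Remark~2 and the convention $S_{t,m}=0$ for $t \leq 0$ is a slightly more rigorous rendering of the same counting argument, not a different proof.
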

\begin{proof}
First, we show that any feasible solution of Formulation~\eqref{eq:7} satisfies all constraints of Formulation~\eqref{eq:4}.

\textbf{Constraints Equivalence:}
\begin{itemize}
    \item \textbf{Constraint~\eqref{eq:4b}} is the same as \textbf{Constraint~\eqref{eq:7a}}:
    \[
    \sum_{t \in T} S_{t,m} = 1 \quad \forall m \in M.
    \]
    \item \textbf{Constraints~\eqref{eq:4d}} and \eqref{eq:4e} are the same as \textbf{Constraints~\eqref{eq:7c}} and \eqref{eq:7d}, specifying variable domains.
\end{itemize}

\textbf{Satisfying Constraint~\eqref{eq:4a}:}  \[
    \sum_{t \in T} X_{t,m} = W_m \quad \forall m \in M,
    \]
    is satisfied by \textbf{Lemma~\ref{lm:main}} under Formulation~\eqref{eq:6} and Formulation~\eqref{eq:7}.

\textbf{Satisfying Constraint~\eqref{eq:4c}:} From \textbf{Constraint~\eqref{eq:7b}}, we have:
\[
\sum_{t'=t - W_m +1}^{t} S_{t',m} = X_{t,m} \geq X_{t,m}.
\]
Thus, \textbf{Constraint~\eqref{eq:4c}} is satisfied.

\vspace{0.5em}

Next, we show that any feasible solution of Formulation~\eqref{eq:4} satisfies all constraints of Formulation~\eqref{eq:7}.

\textbf{Constraints Equivalence:}
\begin{itemize}
    \item \textbf{Constraint~\eqref{eq:7a}} is the same as \textbf{Constraint~\eqref{eq:4b}}.
    \item \textbf{Constraints~\eqref{eq:7c}} and \eqref{eq:7d} are the same as \textbf{Constraints~\eqref{eq:4d}} and \eqref{eq:4e}.
\end{itemize}

\textbf{Satisfying Constraint~\eqref{eq:7b}:} Summing \textbf{Constraint~\eqref{eq:4c}} over $t \in T$, we get:
\[
\sum_{t \in T} \left( \sum_{t'=t - W_m +1}^{t} S_{t',m} - X_{t,m} \right) \geq 0.
\]
Switching the order of summation:
\[
\sum_{t' \in T} S_{t',m} \left( \sum_{t = t'}^{t'+ W_m -1} 1 \right) - \sum_{t \in T} X_{t,m} \geq 0.
\]
Simplifying:
\[
W_m \sum_{t' \in T} S_{t',m} - W_m = W_m \left( \sum_{t' \in T} S_{t',m} - 1 \right) \geq 0.
\]
Using \textbf{Constraint~\eqref{eq:4b}}, $\sum_{t' \in T} S_{t',m} = 1$, so:
\[
\sum_{t \in T} \left( \sum_{t'=t - W_m +1}^{t} S_{t',m} - X_{t,m} \right) = 0
\]
Since the sum of non-negative terms equals zero, each term must be zero:
\[
\sum_{t'=t - W_m +1}^{t} S_{t',m} - X_{t,m} = 0 \quad \forall t \in T, m \in M.
\]
Thus:
\[
\sum_{t'=t - W_m +1}^{t} S_{t',m} = X_{t,m},
\]
which satisfies \textbf{Constraint~\eqref{eq:7b}}.

Therefore, Formulations~\eqref{eq:4} and \eqref{eq:7} are equivalent.
\end{proof}

\begin{cor}
\label{cor:4-6}
Formulation \eqref{eq:6} is equivalent to Formulation~\eqref{eq:4}.
\end{cor}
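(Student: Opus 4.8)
The plan is to avoid repeating any case analysis and instead obtain the result by transitivity of the equivalence relation of Definition~2. The two ingredients are already in hand: the statement accompanying Formulation~\eqref{eq:6} asserts that Formulations~\eqref{eq:6} and~\eqref{eq:7} have identical linear relaxations, and Proposition~\ref{pro:4-7} establishes that Formulation~\eqref{eq:7} is equivalent to Formulation~\eqref{eq:4}. Chaining these two facts should yield the claim immediately.

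First I would make precise the sense in which Formulations~\eqref{eq:6} and~\eqref{eq:7} coincide. Formulation~\eqref{eq:6} carries only the variables $S_{t,m}$, using the expression $\sum_{t'=t-W_m+1}^{t} S_{t',m}$ wherever $X_{t,m}$ would appear; reading this expression as the definition of $X_{t,m}$ is exactly Constraint~\eqref{eq:7b}. Hence, once both models are written in the common $(S,X)$ coordinates, their continuous feasible regions are literally the same set, so Formulation~\eqref{eq:6} is at least as tight as Formulation~\eqref{eq:7} and vice versa; that is, the two are equivalent in the sense of Definition~2.

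Next I would record that the relation ``at least as tight as'' of Definition~1 is transitive: if every continuous solution of $A$ satisfies $B$ and every continuous solution of $B$ satisfies $C$, then every continuous solution of $A$ satisfies $C$. Consequently the equivalence of Definition~2 is also transitive. Applying this to Formulations~\eqref{eq:6}, \eqref{eq:7}, and~\eqref{eq:4}, using the equivalence of~\eqref{eq:6} and~\eqref{eq:7} from the first step together with the equivalence of~\eqref{eq:7} and~\eqref{eq:4} from Proposition~\ref{pro:4-7}, delivers the equivalence of Formulation~\eqref{eq:6} and Formulation~\eqref{eq:4}.

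The only real obstacle is the bookkeeping mismatch between variable spaces: Formulation~\eqref{eq:6} nominally lives in the $S$-space, while Formulations~\eqref{eq:4} and~\eqref{eq:7} live in the joint $(S,X)$-space. I would resolve this exactly as the paper implicitly does, by treating $X_{t,m}$ in Formulation~\eqref{eq:6} as the defined quantity $\sum_{t'=t-W_m+1}^{t} S_{t',m}$; with that identification the transitivity argument is routine. As an alternative self-contained route, the entire proof of Proposition~\ref{pro:4-7} transfers verbatim with~\eqref{eq:6} in place of~\eqref{eq:7}, since that proof invokes Constraint~\eqref{eq:7b} only through the very cumulative-sum identity that is built into Formulation~\eqref{eq:6}.
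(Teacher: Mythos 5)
Your proof is correct and matches the paper's intended argument: the paper offers no separate proof for this corollary, relying precisely on the stated identification of Formulation~\eqref{eq:6} with Formulation~\eqref{eq:7} (identical linear relaxations via reading $X_{t,m}$ as $\sum_{t'=t-W_m+1}^{t} S_{t',m}$) chained with Proposition~\ref{pro:4-7} by transitivity. Your explicit treatment of the variable-space bookkeeping is a welcome clarification but does not change the route.
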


\begin{pro}
\label{pro:5-7}
Formulation~\eqref{eq:7} is strictly tighter than Formulation~\eqref{eq:5}.
\end{pro}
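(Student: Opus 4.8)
The plan is to establish strict tightness in the two-part pattern used throughout this section: first show that every continuous solution of Formulation~\eqref{eq:7} satisfies all constraints of Formulation~\eqref{eq:5}, and then exhibit a fractional point that is feasible for Formulation~\eqref{eq:5} but infeasible for Formulation~\eqref{eq:7}.

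For the ``at least as tight'' direction I would dispatch the easy constraints first: the bound constraint~\eqref{eq:5c} coincides with~\eqref{eq:7c}, and the duration equality~\eqref{eq:5a} is exactly the conclusion of Lemma~\ref{lm:main}. The only substantive requirement is~\eqref{eq:5b}, and here the key observation is that~\eqref{eq:5b} is merely the windowed aggregation of the per-index inequalities already proved in Lemma~\ref{lm:x_t+j}. Concretely, that lemma gives $X_{t,m}-X_{t-1,m}\leq X_{t+j,m}$ for all $j\in\{0,\dots,W_m-1\}$, and summing these $W_m$ inequalities over $j$ yields
\[
W_m\,(X_{t,m}-X_{t-1,m})\;\leq\;\sum_{j=0}^{W_m-1}X_{t+j,m}\;=\;\sum_{t'=t}^{t+W_m-1}X_{t',m},
\]
which is precisely~\eqref{eq:5b}. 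Hence Formulation~\eqref{eq:7} is at least as tight as Formulation~\eqref{eq:5}.

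For strictness I would reuse the counterexample constructed in the proof of Proposition~\ref{pro:2-7}, namely the instance with $T=\{1,\dots,10\}$, $W_m=3$, and $X_{t,m}=\tfrac{1}{3}$ for $t\in\{1,2,3,6,7,8\}$ and $X_{t,m}=\tfrac{1}{4}$ for $t\in\{4,5,9,10\}$. That point was already shown to satisfy the sharper per-index constraints~\eqref{eq:2b}; since~\eqref{eq:5b} is obtained by summing exactly those per-index inequalities over the window $\{t,\dots,t+W_m-1\}$, it holds automatically, and~\eqref{eq:5a} was verified there as well. Thus the point is feasible for Formulation~\eqref{eq:5}. The same argument as in Proposition~\ref{pro:2-7} then shows that no $S$ can reproduce these $X$ values through~\eqref{eq:7a}--\eqref{eq:7b}, so the point is infeasible for Formulation~\eqref{eq:7}, completing the strict separation.

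I do not anticipate a serious obstacle, since Lemmas~\ref{lm:main} and~\ref{lm:x_t+j} already carry the analytical weight. The one point requiring care is recognizing that~\eqref{eq:5b} is an aggregated (summed) version of the sharper constraint~\eqref{eq:2b}: this simultaneously reduces the inclusion direction to a one-line consequence of Lemma~\ref{lm:x_t+j} and guarantees that the Proposition~\ref{pro:2-7} counterexample transfers verbatim. Should an independent check be preferred, one could instead verify~\eqref{eq:5b} directly at the start index $t=1$, where the left-hand side $W_m(X_{1,m}-X_{0,m})=1$ meets the right-hand side $\sum_{t'=1}^{3}X_{t',m}=1$, but the aggregation observation renders this redundant.
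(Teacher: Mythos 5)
Your proposal is correct and follows essentially the same route as the paper: the inclusion direction via Lemma~\ref{lm:main} for~\eqref{eq:5a} and the summation of Lemma~\ref{lm:x_t+j} over $j\in\{0,\dots,W_m-1\}$ for~\eqref{eq:5b}, and strictness via the identical counterexample from Proposition~\ref{pro:2-7}. Your one refinement --- observing that~\eqref{eq:5b} is the aggregation of the~\eqref{eq:2b} inequalities, so feasibility of the counterexample for Formulation~\eqref{eq:5} transfers without re-verification --- is a valid minor streamlining of the paper's direct check at $t=1$.
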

\begin{proof}
First, we show that Formulation~\eqref{eq:7} is at least as tight as Formulation~\eqref{eq:5} by demonstrating that any feasible solution to Formulation~\eqref{eq:7} also satisfies all constraints of Formulation~\eqref{eq:5}.

\textbf{Constraints Equivalence and Satisfaction:}
\begin{itemize}
    \item \textbf{Constraint~\eqref{eq:5c}} specifies the bounds on variables:
    \[
    0 \leq X_{t,m} \leq 1 \quad \forall t \in T, m \in M,
    \]
    which is also satisfied in Formulation~\eqref{eq:7}.

    \item \textbf{Constraint~\eqref{eq:5a}}:
    \[
    \sum_{t \in T} X_{t,m} = W_m \quad \forall m \in M,
    \]
    is satisfied by \textbf{Lemma~\ref{lm:main}} under Formulation~\eqref{eq:7}.
\end{itemize}

\textbf{Satisfying Constraint~\eqref{eq:5b}:} We need to show that:
\[
W_m \left( X_{t,m} - X_{t-1,m} \right) \leq \sum_{t'=t}^{t+W_m - 1} X_{t',m} \quad \forall t \in T, m \in M.
\]
Using \textbf{Lemma~\ref{lm:x_t+j}}, we have:
\[
X_{t,m} - X_{t-1,m} \leq X_{t+j,m} \quad \forall m \in M, \quad \forall j \in \{0, \dots, W_m - 1\}.
\]
Summing both sides over $j$ from $0$ to $W_m - 1$, we get:
\[
\sum_{j=0}^{W_m - 1} \left( X_{t,m} - X_{t-1,m} \right) \leq \sum_{j=0}^{W_m - 1} X_{t+j,m}.
\]
Simplifying the left-hand side:
\[
W_m \left( X_{t,m} - X_{t-1,m} \right) \leq \sum_{t'=t}^{t+W_m - 1} X_{t',m}.
\]
This satisfies Constraint~\eqref{eq:5b}.

Therefore, all constraints of Formulation~\eqref{eq:5} are satisfied by any feasible solution of Formulation~\eqref{eq:7}.

\vspace{0.5em}

Next, we show that Formulation~\eqref{eq:7} is strictly tighter by providing a fractional solution that is feasible for Formulation~\eqref{eq:5} but infeasible for Formulation~\eqref{eq:7}.

\textbf{Counterexample:} Consider the same fractional solution used in the proof of Proposition~\ref{pro:2-7}. Let there be $T = \{1, 2, \dots, 10\}$ and a maintenance task $m$ with duration $W_m = 3$. Define the fractional solution:
\[
X_{t,m} = 
\begin{cases}
\frac{1}{3}, & \text{for } t = 1, 2, 3, 6, 7, 8, \\
\frac{1}{4}, & \text{for } t = 4, 5, 9, 10.
\end{cases}
\]
This solution satisfies the constraints of Formulation~\eqref{eq:5}:
\begin{itemize}
    \item \textbf{Constraint~\eqref{eq:5a}}:
    \[
    \sum_{t \in T} X_{t,m} = 6 \times \tfrac{1}{3} + 4 \times \tfrac{1}{4} = 2 + 1 = 3 = W_m.
    \]
    \item \textbf{Constraint~\eqref{eq:5b}}:
    For all $t \in T$, we can verify that:
    \[
    W_m \left( X_{t,m} - X_{t-1,m} \right) \leq \sum_{t'=t}^{t+W_m - 1} X_{t',m}.
    \]
    For example, at $t=1$:
    \[
    3 \left( \tfrac{1}{3} - 0 \right) = 1 \leq \tfrac{1}{3} + \tfrac{1}{3} + \tfrac{1}{3} = 1.
    \]
    Similar calculations can be done for other $t$.
    \item \textbf{Constraint~\eqref{eq:5c}}: All variables are within their bounds $[0, 1]$.
\end{itemize}

However, there cannot be a solution $S_{t,m}$ that satisfies both \textbf{Constraint~\eqref{eq:6a}}:
\[
\sum_{t \in T} S_{t,m} = 1,
\]
and \textbf{Constraint~\eqref{eq:7b}}:
\[
X_{t,m} = \sum_{t'=t - W_m +1}^{t} S_{t',m} \quad \forall t \in T,
\]
because the given $X_{t,m}$ values do not correspond to any $S_{t,m}$ satisfying these constraints. Specifically, the varying values of $X_{t,m}$ cannot be expressed as cumulative sums of $S_{t,m}$ over fixed durations in a way that sums $S_{t,m}$ to 1.

Therefore, this fractional solution is feasible for Formulation~\eqref{eq:5} but infeasible for Formulation~\eqref{eq:7}, proving that Formulation~\eqref{eq:7} is strictly tighter than Formulation~\eqref{eq:5}.
\end{proof}

\begin{cor}
\label{cor:5-6}
Formulation~\eqref{eq:6} is strictly tighter than Formulation~\eqref{eq:5}.
\end{cor}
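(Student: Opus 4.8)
The plan is to derive this corollary from the already-established Proposition~\ref{pro:5-7} by invoking the equivalence of Formulations~\eqref{eq:6} and \eqref{eq:7}. Recall that when Formulation~\eqref{eq:7} was introduced, Formulations~\eqref{eq:6} and \eqref{eq:7} were declared to have identical linear relaxations and to be treated as the same for tightness analysis: Formulation~\eqref{eq:6} merely suppresses the explicit variable $X$, using the expression $\sum_{t'=t-W_m+1}^{t} S_{t',m}$ wherever $X_{t,m}$ would appear, which is exactly the defining relation~\eqref{eq:7b}. Consequently, the projection of the continuous feasible region of Formulation~\eqref{eq:7} onto the $X$-space coincides with the set of $X$-profiles admitted by Formulation~\eqref{eq:6}, and any tightness statement proved for Formulation~\eqref{eq:7} against an $X$-only formulation transfers verbatim to Formulation~\eqref{eq:6}.

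First I would establish that Formulation~\eqref{eq:6} is at least as tight as Formulation~\eqref{eq:5}. Given any continuous solution of Formulation~\eqref{eq:6}, the induced values $X_{t,m} := \sum_{t'=t-W_m+1}^{t} S_{t',m}$ together with $S$ form a continuous solution of Formulation~\eqref{eq:7}; by the first half of Proposition~\ref{pro:5-7} this solution satisfies all constraints of Formulation~\eqref{eq:5}, so in particular the induced $X$ satisfies \eqref{eq:5a}--\eqref{eq:5c}. Thus every $X$-profile feasible for Formulation~\eqref{eq:6} is feasible for Formulation~\eqref{eq:5}.

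Next I would exhibit a continuous solution feasible for Formulation~\eqref{eq:5} but not for Formulation~\eqref{eq:6}, reusing verbatim the counterexample from the proof of Proposition~\ref{pro:5-7}: the instance $T=\{1,\dots,10\}$, $W_m=3$, with $X_{t,m}=\tfrac13$ for $t\in\{1,2,3,6,7,8\}$ and $X_{t,m}=\tfrac14$ for $t\in\{4,5,9,10\}$. That argument already verifies feasibility for Formulation~\eqref{eq:5} and shows that no $S$ with $\sum_{t\in T} S_{t,m}=1$ reproduces this $X$ through the cumulative-sum relation~\eqref{eq:7b}; since that relation is precisely how Formulation~\eqref{eq:6} encodes $X$, the same $X$-profile is inadmissible for Formulation~\eqref{eq:6}. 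Combining the two parts yields the claim.

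The only delicate point, and the step I would be most careful with, is the comparison across differing variable sets: Formulation~\eqref{eq:5} lives in $X$-space while Formulation~\eqref{eq:6} is written purely in $S$-space. The comparison is meaningful precisely because Formulation~\eqref{eq:6} carries an implicit $X$ via \eqref{eq:7b}, so ``at least as tight'' must be read as containment of the projected $X$-feasible regions --- exactly the convention under which Formulations~\eqref{eq:6} and \eqref{eq:7} were declared identical. Once this reading is fixed, the corollary is an immediate transitive consequence of the equivalence of Formulations~\eqref{eq:6} and \eqref{eq:7} together with Proposition~\ref{pro:5-7}, and no further computation beyond citing the earlier counterexample is required.
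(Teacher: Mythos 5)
Your proposal is correct and matches the paper's intent exactly: the paper states Corollary~\ref{cor:5-6} without a separate proof precisely because Formulations~\eqref{eq:6} and \eqref{eq:7} were declared to have identical linear relaxations, so the result follows from Proposition~\ref{pro:5-7} and its counterexample, which is the argument you give. Your added remark making the projection-onto-$X$-space reading explicit is a careful (and welcome) spelling-out of the convention the paper leaves implicit, but it does not constitute a different route.
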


\section{Conclusions}
\label{sec:conclusions}

In this paper, we have systematically analyzed different MIP formulations for the Generator Maintenance Scheduling problem, focusing on the tightness of their linear relaxations. Through rigorous theoretical comparisons, we demonstrated that Formulation~\eqref{eq:6} consistently provides the tightest linear relaxation among all the models examined. This superiority translates to better dual bound found in the linear relaxation and should be used for solving GMS problems with BnB or cutting planes method. Formulations ~\eqref{eq:4} and \eqref{eq:7} are equivalent in tightness to Formulation~\eqref{eq:6} but require twice the number of variables.

Furthermore, some Formulations explicitly represent both the start and state of maintenance, such as Formulation~\eqref{eq:4}. When that is the case, sometimes we only have to impose the integrality of one of the variables. For example, that is trivial for Formulation~\eqref{eq:7}. In that formulation, if variable $S$ is binary, then $X$ is also binary, even if its integrality was not imposed. However, this fact does not impact the tightness analysis since we are analyzing how the linear relaxation of the formulations compares to it others.

Future work is underway to extend this analysis by incorporating additional constraints, such as precedence and non-coincidence conditions, or by exploring the integration of predictive maintenance elements. In this context, the density of the MIP may impact, and the Formulation~\eqref{eq:7} may be advantageous even though it requires more variables.

\bibliographystyle{ieeetr}
\bibliography{bibliography.bib}






\end{document}